\documentclass{amsart}




\usepackage{amssymb}
\usepackage{amsmath}
\usepackage{verbatim}
\usepackage{hyperref}

\newtheorem{theorem}{Theorem}[section]
\newtheorem{lemma}[theorem]{Lemma}

\numberwithin{equation}{section}

\newcommand{\C}{\ensuremath{\mathbb{C}^n}}

 \newcommand{\Z}{\ensuremath{\mathbb{Z}^{2n}}}

\newcommand{\Ftwophi}{\ensuremath{F_\phi ^2 }}

\newcommand{\incn}{\ensuremath{\int_{\C}}}

\begin{document}
\title[Schatten class Toeplitz operators]{Schatten class Toeplitz operators on generalized Fock spaces}

\author{Joshua Isralowitz, Jani Virtanen, \and Lauren Wolf}

\begin{abstract}
In this paper we characterize the Schatten $p$ class membership of Toeplitz operators with positive measure symbols acting on generalized Fock spaces for the full range $0 < p < \infty$.
\end{abstract}

\subjclass[2010]{47B35, 30H20}

\keywords{Toeplitz operator, Fock space, Schatten class}

\address{Department of Mathematics, University at Albany, Albany, NY 12222, USA}
\email{jisralowitz@albany.edu}

\address{Department of Mathematics, University of Reading, Reading RG6 6AX, UK}
\email{j.a.virtanen@reading.ac.uk}

\address{Department of Mathematics, University at Albany, Albany, NY 12222, USA}
\email{lwolf-christensen@albany.edu}

\maketitle

\section{Introduction}
 Let $d^c =  \frac{i}{4} (\overline{\partial} - \partial)$ and let $d$ be the usual exterior derivative. Throughout the paper, let $\phi \in C^2(\C)$ be a real valued function on $\C$  such that \begin{equation} c \omega_0 < d d^c \phi < C \omega_0 \label{PhiCond} \end{equation} holds uniformly pointwise on $\C$ for some positive constants $c$ and $C$ (in the sense of positive $(1, 1)$ forms) where $\omega_0 = d d^c |\cdot |^2$ is the standard Euclidean K\"{a}hler form.

 Define $F^2_\phi$ to be the set of entire functions such that
$$
	\int_{\C} |f(z)|^2 e^{-2\phi(z)} dv(z) < \infty.
$$
Denote by $P$ the orthogonal projection of $L^2(e^{-2\phi}dv)$ onto $F^2_\phi$. For a positive measure $\mu$, define the Toeplitz operator $T_\mu : F^2_\phi\to F^2_\phi$ with symbol $\mu$ by setting
$$
	T_\mu f(z) = \int_{\mathbb{C}^n} K(z, w) f(w) e^{-2\phi(w)} d\mu(w),
$$
where $K$ stands for the reproducing (or Bergman) kernel of $F^2_\phi$, that is,
$$
	K(z, w) = \sum_{k=1}^\infty f_k(z) \overline{f_k(w)},
$$
where $\{f_k\}$ is any orthonormal basis for $F^2_\phi$. In the next section we list some recent results on the reproducing kernel (see~\cite{SV}), which will be crucial to the proofs of our main results on Schatten class properties of Toeplitz operators.

In \cite{IZ} (see also a recent monograph of Zhu \cite{Z}), Toeplitz operators were considered in the setting of the standard weighted Fock spaces, that is, when $\phi(z) = \frac{\alpha}{2} |z|^2$ for $\alpha > 0$, and characterizations of bounded, compact and Schatten class Toeplitz operators with positive measure symbols were provided (moreover, see \cite{SV} for a similar characterization of bounded and compact Toeplitz operators with positive measure symbols on $\Ftwophi$). In particular, the Schatten class membership of these Toeplitz operators was characterized in terms of the heat (Berezin) transform of the symbol and in terms of the averaging function $\mu(B(\cdot, r))$.

In this paper we will provide very similar characterizations of the Schatten class membership of these Toeplitz operators.  Note that unlike the classical Fock space setting where one can utilize explicit formulas for the reproducing kernel, we instead must rely on some known estimates on the behavior of the reproducing kernel (see the first three lemmas in the next section). The proofs of our characterizations will (as usual) be divided into the two cases  $0< p \leq 1$ (which will be dealt with in Section \ref{case1})  and $p>1$ (which will be dealt with in Section \ref{case2}.)

Let us note that one can easily write the so called ``Fock-Sobolev spaces" from \cite{CZ} as a weighted Fock space $\Ftwophi$ with $\phi$ satisfying (\ref{PhiCond}), so that in particular our results immediately apply to these spaces (see \cite{I} for more details.) 

Finally, note that we will often use the notation $A \lesssim B$ for two nonnegative quantities $A$ and $B$ if $A \leq C B$ for an unimportant constant $C$.  Moreover, the notation $A \gtrsim B$ and $A \approx B$ will have similar meanings.

\section{The case $0 < p  \leq 1$}\label{case1}  In this section we will characterize Schatten $p$ class $T_\mu$ for the case $0 < p \leq 1$.    We will often use the following three lemmas from \cite{SV}.

\begin{lemma} \label{OffDiagDecayLem} If $K$ is the reproducing kernel of $\Ftwophi$ then there exists $\epsilon_0 > 0$ where \begin{equation*} e^{-\phi(w)} |K(z, w)| e^{-\phi(z)} \lesssim e^{-\epsilon_0 |z - w|} \end{equation*} \end{lemma}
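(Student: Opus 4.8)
The plan is to derive the pointwise bound from two ingredients: a local sub-mean-value inequality that passes from $L^2$ averages to pointwise values, and a Hörmander-type weighted $L^2$ estimate that produces the exponential gain.

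First I would record the sub-mean-value step. For fixed $w$ the function $\psi(z)=2\log|K(z,w)|-2\phi(z)$ satisfies, as a current, $dd^c\psi \ge -2\,dd^c\phi\ge -2C\omega_0$ by \eqref{PhiCond}, so $\psi$ is plurisubharmonic up to a bounded defect. Hence there is a constant with $\psi(z)\le C' + \frac{1}{v(B(z,1))}\int_{B(z,1)}\psi\,dv$, and exponentiating through Jensen's inequality gives the sub-mean bound
$$|K(z,w)|^2 e^{-2\phi(z)}\lesssim \int_{B(z,1)}|K(\zeta,w)|^2 e^{-2\phi(\zeta)}\,dv(\zeta).$$
Applied with the reproducing identity $K(w,w)=\int_{\C} |K(\zeta,w)|^2 e^{-2\phi(\zeta)}\,dv(\zeta)$ this already yields the on-diagonal upper bound $K(z,z)\lesssim e^{2\phi(z)}$, and, more to the point, it reduces the lemma to the exponentially weighted estimate $\int_{\C}|K(z,w)|^2 e^{-2\phi(z)}e^{2\epsilon|z-w|}\,dv(z)\lesssim e^{2\phi(w)}$ for some small $\epsilon>0$: indeed, for $\zeta\in B(z,1)$ one has $|\zeta-w|\ge|z-w|-1$, so the weighted bound feeds back through the sub-mean inequality to give $|K(z,w)|^2 e^{-2\phi(z)}\lesssim e^{2\phi(w)}e^{-2\epsilon|z-w|}$, which is the claim with $\epsilon_0=\epsilon$.

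The heart of the argument is therefore this weighted $L^2$ bound, which I would establish by Hörmander's theorem after perturbing the weight. Let $\eta$ be a smooth regularization of $z\mapsto|z-w|$ with $|\nabla\eta|\le 1$ and bounded complex Hessian, and set $\rho=\phi-\epsilon\eta$. Since $\eta$ is essentially convex its Levi form is nonnegative and controlled, so $dd^c\rho=dd^c\phi-\epsilon\,dd^c\eta\ge (c-\epsilon C'')\omega_0\ge \tfrac{c}{2}\omega_0$ once $\epsilon$ is small; thus $\rho$ again satisfies \eqref{PhiCond}. This stability of the lower Levi bound under the perturbation is the crucial point, as it lets Hörmander's weighted $L^2$ solution of $\bar\partial$ run in the weight $\rho$ with uniform constants. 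Using it one constructs peak functions (giving also the lower diagonal bound $K(w,w)\gtrsim e^{2\phi(w)}$) and, via the extremal characterization of $K(\cdot,w)$ as the minimal-norm reproducer at $w$, transfers the unweighted control of $K(\cdot,w)$ into control of its $\Ftwophi$-norm in the heavier weight, which is exactly the desired $\int_{\C}|K(z,w)|^2 e^{-2\phi(z)}e^{2\epsilon\eta(z)}\,dv(z)\lesssim e^{2\phi(w)}$.

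I expect the main obstacle to be precisely this last transfer between the two weights. The reproducing kernel $K(\cdot,w)$ is extremal for the $\phi$-norm, not the $\rho$-norm, so one cannot simply invoke minimality; instead one must modify a cutoff of $K(\cdot,w)$ near $w$ by solving $\bar\partial$ in the weight $\rho$ and check that the correction preserves the value at $w$ while keeping its weighted norm $\lesssim e^{2\phi(w)}$. Verifying that the constants in the Levi-form lower bound, and hence in Hörmander's estimate, remain uniform in $w$ (so that the exponential rate $\epsilon_0$ is independent of $w$) and handling the mild singularity of $|z-w|$ at $w$ through the regularization $\eta$ are the technical crux; the passage from $n=1$ to general $n$ is only notational, requiring $(1,1)$-forms in place of the scalar Laplacian throughout.
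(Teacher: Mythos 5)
A preliminary remark: the paper does not actually prove Lemma \ref{OffDiagDecayLem}; it quotes it from \cite{SV}, where it is in turn attributed to work of Christ (for $n=1$) and Delin (for general $n$). Your outline follows the same H\"{o}rmander-based route as those original proofs, and its first two steps are sound: the sub-mean-value inequality (essentially Lemma \ref{PointwiseBound}, also quoted from \cite{SV}) correctly reduces the pointwise bound to the weighted estimate $\int_{\mathbb{C}^n}|K(z,w)|^2e^{-2\phi(z)}e^{2\epsilon|z-w|}\,dv(z)\lesssim e^{2\phi(w)}$, and the observation that $\rho=\phi-\epsilon\eta$ retains a uniform positive lower bound on its Levi form for small $\epsilon$ is exactly what makes H\"{o}rmander's theorem available with constants independent of $w$.

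The gap is at the step you yourself flag as the crux, and the fix you propose does not close it. Cutting off, solving $\bar\partial u=K(\cdot,w)\bar\partial\chi$ in the weight $\rho$ with a logarithmic pole so that $u(w)=0$, and setting $F=\chi K(\cdot,w)-u$ produces an entire function with $F(w)=K(w,w)$ and $\|F\|_{L^2(e^{-2\rho}dv)}\lesssim e^{\phi(w)}$ --- that is, a peak function. This is precisely what yields lower bounds such as Lemma \ref{LowerBoundForKernel} and $K(w,w)\gtrsim e^{2\phi(w)}$, but it gives no information about $K(\cdot,w)$ itself: $F\neq K(\cdot,w)$, and no extremal principle identifies them, since $K(\cdot,w)$ is extremal for the $\phi$-norm while $F$ is constructed in the $\rho$-norm. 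To control the kernel this way you would need the particular solution $-(1-\chi)K(\cdot,w)$ of the same $\bar\partial$-equation to be the one H\"{o}rmander estimates; but H\"{o}rmander's theorem bounds only the canonical (minimal) solution, and $(1-\chi)K(\cdot,w)$ is not it --- indeed $\langle f,(1-\chi)K(\cdot,w)\rangle_\phi=f(w)-\langle f,\chi K(\cdot,w)\rangle_\phi$, which is nonzero already for $f=K(\cdot,w)$. The missing ingredient, which is the actual heart of Delin's argument, is a weighted estimate for the canonical solution operator: equivalently, boundedness of the projection $P$ on $L^2(e^{-2\phi}e^{2\epsilon\eta}dv)$ for Lipschitz $\eta$ of small Lipschitz constant (Berndtsson's lemma). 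Granting that, one represents $K(\cdot,w)=P(g_w)$ for explicit data $g_w$ concentrated in $B(w,1)$ (Kerzman's trick; here the unboundedness of $\nabla\phi$ forces one first to replace $\phi$ on $B(w,1)$ by a pluriharmonic function up to an $O(1)$ error), and your sub-mean-value reduction then finishes the proof. Without some form of this projection or minimal-solution estimate, peak functions alone cannot give the off-diagonal upper bound.
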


\begin{lemma} \label{LowerBoundForKernel} There exists $\delta > 0$ where \begin{equation*}  e^{-\phi(w)} |K(z, w)| e^{-\phi(z)} \gtrsim 1 \end{equation*} for all $w \in B(z, \delta)$.   In particular, $K(z, z) e^{-2\phi(z)} \approx 1$.  \end{lemma}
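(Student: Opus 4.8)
The plan is to prove the two lower bounds and note that the upper bound implicit in $K(z,z)e^{-2\phi(z)}\approx 1$ is free: taking $w=z$ in Lemma~\ref{OffDiagDecayLem} gives $K(z,z)e^{-2\phi(z)}\lesssim 1$. The two lower estimates are linked through the diagonal. Writing $k_z = K(\cdot,z)/\sqrt{K(z,z)}$ for the normalized reproducing kernel (so that $\|k_z\|_{\Ftwophi}=1$) and using $|K(z,w)|=|K(w,z)|$, one has
$$ e^{-\phi(w)}|K(z,w)|e^{-\phi(z)} = \sqrt{K(z,z)e^{-2\phi(z)}}\;|k_z(w)|e^{-\phi(w)}. $$
Thus everything reduces to (i) the diagonal bound $K(z,z)e^{-2\phi(z)}\gtrsim 1$, and (ii) the fact that $|k_z(w)|e^{-\phi(w)}$ stays $\gtrsim 1$ on a fixed ball $B(z,\delta)$, uniformly in $z$.

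For (i) I would use the extremal characterization $1/K(z,z) = \inf\{\|f\|_{\Ftwophi}^2 : f(z)=1\}$, which reduces the claim to producing, for each $z$, an entire $f$ with $f(z)=1$ and $\|f\|^2\lesssim e^{-2\phi(z)}$, with constant independent of $z$. First I record the local holomorphic model of $\phi$: if $q_z$ is the second-order holomorphic Taylor polynomial of $\phi$ at $z$, then the purely holomorphic second-order terms cancel and \eqref{PhiCond} yields $|\phi(w)-\operatorname{Re}q_z(w)|\lesssim|w-z|^2$ on a fixed ball, uniformly in $z$. Consequently $\tilde g := e^{q_z-\phi(z)}$ is entire with $\tilde g(z)=1$ and $|\tilde g(w)|e^{-\phi(w)}\approx e^{-\phi(z)}$ near $z$. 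Fixing a cutoff $\chi$ equal to $1$ on $B(z,\delta)$ and supported in $B(z,2\delta)$, I would solve $\bar\partial u = \bar\partial(\chi\tilde g)$ by H\"ormander's $L^2$ theorem with a plurisubharmonic weight of the form $2\phi(w)+2n\rho(|w-z|)$, where $\rho$ agrees with $\log|w-z|$ near $z$ and is capped to a constant far out. The logarithmic singularity forces the minimal solution to satisfy $u(z)=0$, while capping $\rho$ keeps the weight comparable to $2\phi$, so that $\int|u|^2 e^{-2\phi}\lesssim e^{-2\phi(z)}$; here $dd^c\phi>c\omega_0$ is what keeps the weight plurisubharmonic after adding $\rho$ and yields a uniform H\"ormander constant. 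Since $\bar\partial(\chi\tilde g)=(\bar\partial\chi)\tilde g$ is supported in the annulus where $|\tilde g|e^{-\phi}$ is bounded, the right-hand side of the estimate is $\lesssim e^{-2\phi(z)}$. Then $f := \chi\tilde g - u$ is entire with $f(z)=1$ and $\|f\|^2\lesssim e^{-2\phi(z)}$, giving (i) and hence $K(z,z)e^{-2\phi(z)}\approx 1$.

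For (ii), the displayed identity and (i) give $|k_z(z)|e^{-\phi(z)}=\sqrt{K(z,z)e^{-2\phi(z)}}\approx 1$, while Lemma~\ref{OffDiagDecayLem} yields the uniform upper bound $|k_z(w)|e^{-\phi(w)}\approx|K(z,w)|e^{-\phi(z)-\phi(w)}\lesssim e^{-\epsilon_0|w-z|}\le 1$. To propagate the lower bound from $w=z$ to a whole ball I would pass to the genuinely holomorphic $h:=k_z e^{-q_z}$, which satisfies $|h|\approx|k_z|e^{-\phi}$ on a fixed ball and hence $\sup_{B(z,1)}|h|\lesssim 1$ uniformly. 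A Cauchy estimate then bounds $|\nabla h|$ uniformly on $B(z,\tfrac12)$, so $|h(w)-h(z)|<\tfrac12|h(z)|$ for all $w\in B(z,\delta)$ once $\delta$ is a small absolute constant; this gives $|h(w)|\gtrsim 1$, hence $|k_z(w)|e^{-\phi(w)}\gtrsim 1$ on $B(z,\delta)$, completing (ii).

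The main obstacle is the $\bar\partial$-construction in (i): one must choose the weight so that its singularity at $z$ is strong enough to kill the correction term there (which is what pins $f(z)=1$), yet mild enough, after capping, that the solution's global $\Ftwophi$-norm is controlled by $e^{-2\phi(z)}$ uniformly in $z$. Balancing these against the requirement that the weight remain plurisubharmonic is exactly where the uniform two-sided bound \eqref{PhiCond} enters, and it is essentially the only place where anything beyond Lemma~\ref{OffDiagDecayLem} and soft Hilbert-space facts is needed.
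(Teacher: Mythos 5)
A preliminary remark: the paper does not prove this lemma at all --- it is quoted from \cite{SV} --- so there is no internal proof to compare against; your strategy (H\"ormander's $\bar\partial$-theorem with a logarithmically singular weight to obtain the diagonal bound $K(z,z)e^{-2\phi(z)}\gtrsim 1$, then a Cauchy-estimate argument on a locally ``holomorphized'' $k_z$ to spread the lower bound over a fixed ball) is in essence the strategy of the cited source. The reduction to your steps (i) and (ii) via the identity $e^{-\phi(w)}|K(z,w)|e^{-\phi(z)}=\sqrt{K(z,z)e^{-2\phi(z)}}\,|k_z(w)|e^{-\phi(w)}$ is correct, as are the extremal characterization of $1/K(z,z)$ (point evaluations are bounded, by Lemma \ref{PointwiseBound}) and the mechanism by which the singular weight forces $u(z)=0$.

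However, there is a genuine gap, and it sits underneath both halves of your argument: the ``local holomorphic model.'' You assert that the second-order holomorphic Taylor polynomial $q_z$ of $\phi$ satisfies $|\phi(w)-\mathrm{Re}\,q_z(w)|\lesssim|w-z|^2$ on a fixed ball, uniformly in $z$, as a consequence of \eqref{PhiCond}. That is false. Condition \eqref{PhiCond} controls only the mixed derivatives $\partial_j\bar\partial_k\phi$; it gives no bound on the pure second derivatives $\partial_j\partial_k\phi$ and no uniform modulus of continuity for $D^2\phi$, while Taylor's theorem for a $C^2$ function yields only a pointwise $o(|w-z|^2)$ remainder --- nothing uniform in $z$, and nothing at all on a ball of fixed radius. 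Concretely, for $n=1$ take $\phi(w)=|w|^2+\mathrm{Re}(w^4)$: then $dd^c\phi=\omega_0$, so \eqref{PhiCond} holds, yet $\phi(w)-\mathrm{Re}\,q_z(w)=|w-z|^2+\mathrm{Re}\bigl(4z(w-z)^3+(w-z)^4\bigr)$, which at $|w-z|=1$ is of size $|z|$, not $O(1)$. The correct statement --- the one proved and used in \cite{SV}, going back to Christ, Delin and Lindholm --- is that there exists \emph{some} holomorphic $H_z$ on $B(z,r)$, generally not a jet of $\phi$ at $z$, with $|\phi-\mathrm{Re}\,H_z|$ uniformly bounded on $B(z,r)$; in the example above one must take $H_z(w)=2\bar z w-|z|^2+w^4$, which captures the full harmonic part of $\phi$ rather than its Taylor polynomial. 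Producing $H_z$ is a genuine lemma requiring potential theory (for $n=1$, subtract a Green potential of $\Delta\phi$; for general $n$, solve $dd^cb=dd^c\phi$ with $L^\infty$ bounds on a ball), and it cannot be obtained from Taylor expansion. The good news is that this bounded comparison is all your proof actually needs: it suffices both for the H\"ormander data estimate giving $\|f\|^2\lesssim e^{-2\phi(z)}$ and for the comparison $|h|\approx|k_z|e^{-\phi}$ in the Cauchy step, so the argument is repaired by substituting that lemma for the Taylor claim. A smaller point in the same direction: the capped logarithm is not plurisubharmonic where the cap engages (its $dd^c$ acquires negative mass there), so to keep $dd^c(2\phi+2n\rho)\geq\epsilon\,\omega_0$ uniformly one must smooth the cap and take the capping radius large relative to $\sqrt{n/c}$; you allude to this balancing, and it does work, but as written the uniform H\"ormander constant is asserted rather than obtained.
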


\begin{lemma}\label{PointwiseBound} If $r > 0$ then there exists $C_r > 0$ independent of $f \in \Ftwophi$ where \begin{equation*} |f(z) e^{- \phi(z)}|^2    \lesssim C_r \int_{B(z, r)} |f(w) e^{-  \phi(w)}|^2 \, dv(w). \end{equation*} \end{lemma}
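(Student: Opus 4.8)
The plan is to exploit the upper curvature bound in (\ref{PhiCond}) to convert $|f|^2 e^{-2\phi}$ into a genuinely plurisubharmonic weight on the ball $B(z, r)$, and then invoke the sub-mean value inequality for subharmonic functions. The point is that while $|f(w) e^{-\phi(w)}|^2$ need not itself be subharmonic (its logarithm has complex Hessian $dd^c \log|f|^2 - 2\, dd^c\phi$, and the term $-2\,dd^c\phi$ is negative), multiplying by a suitable Gaussian-type factor centered at $z$ repairs this.

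First I would fix $z \in \C$ and introduce the auxiliary function
$$
V(w) = |f(w)|^2 \exp\bigl(-2\phi(w) + 2C|w - z|^2\bigr),
$$
where $C$ is the upper bound from (\ref{PhiCond}). Since $f$ is entire, $\log|f|^2$ is plurisubharmonic, so $dd^c \log|f|^2 \geq 0$. Because $\omega_0 = dd^c |\cdot|^2$ is translation invariant, $dd^c |w - z|^2 = \omega_0$, and the upper bound $dd^c\phi < C\omega_0$ gives
$$
dd^c \bigl(\log|f|^2 - 2\phi + 2C|w - z|^2\bigr) \geq -2\, dd^c\phi + 2C\omega_0 > 0
$$
as $(1,1)$ forms. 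Hence $\log V$ is plurisubharmonic, and since $t \mapsto e^t$ is convex and increasing, $V = \exp(\log V)$ is plurisubharmonic, therefore subharmonic on $\C$ (the zeros of $f$ cause no trouble, since there $\log V = -\infty$ and $V = 0$).

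Next I would apply the sub-mean value property: as a subharmonic function, $V$ satisfies
$$
V(z) \leq \frac{1}{|B(z, r)|} \int_{B(z, r)} V(w)\, dv(w).
$$
Here $V(z) = |f(z)|^2 e^{-2\phi(z)}$ because the quadratic term vanishes at the center, while on $B(z, r)$ one has $|w - z| < r$, so $e^{2C|w - z|^2} \leq e^{2Cr^2}$. Combining these two facts yields
$$
|f(z) e^{-\phi(z)}|^2 \leq \frac{e^{2Cr^2}}{|B(z, r)|} \int_{B(z, r)} |f(w) e^{-\phi(w)}|^2\, dv(w),
$$
which is exactly the claimed estimate with $C_r = e^{2Cr^2} / |B(z, r)|$, a constant depending only on $r$ and the fixed $C$, independent of both $z$ and $f$.

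The one place requiring care — and the main, though mild, obstacle — is the bookkeeping of the curvature inequality: one must add $2C|w - z|^2$ (using the \emph{upper} bound $C$, not $c$) so that $dd^c(2C|w-z|^2) = 2C\omega_0$ exactly compensates the indefinite term $-2\, dd^c\phi > -2C\omega_0$ and renders $\log V$ plurisubharmonic. Equally important is centering the quadratic at $z$ rather than using $2C|w|^2$: this keeps $V(z)$ equal to $|f(z)e^{-\phi(z)}|^2$ and eliminates the cross term $4C\,\mathrm{Re}\langle z, w\rangle$ that would otherwise make $C_r$ depend on $z$, destroying the uniformity asserted in the statement.
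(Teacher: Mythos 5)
Your proof is correct. There is, however, no internal argument in the paper to compare it against: the authors do not prove this lemma at all, but import it verbatim (as one of three preliminary lemmas) from the reference \cite{SV} of Schuster and Varolin. Your argument --- adding the Gaussian correction $2C|w-z|^2$ centered at $z$ so that $\log V$ becomes plurisubharmonic, then applying the volume sub-mean value inequality to $V=\exp(\log V)$ --- is the standard proof of this pointwise estimate in the generalized Fock space literature, essentially the one found in the cited sources, so what your write-up buys is a short, self-contained replacement for the citation. Every step checks out: $dd^c|w-z|^2=\omega_0$ because the cross term $\mathrm{Re}\langle w,z\rangle$ is pluriharmonic in $w$; the upper bound in \eqref{PhiCond} gives $dd^c\bigl(-2\phi+2C|w-z|^2\bigr)=2\bigl(C\omega_0-dd^c\phi\bigr)>0$, so $\log V$ is a sum of the plurisubharmonic current $\log|f|^2$ and a smooth strictly plurisubharmonic function; composition with the convex increasing exponential preserves plurisubharmonicity (with $V=0$ at the zeros of $f$); and plurisubharmonic functions on $\C\cong\mathbb{R}^{2n}$ are subharmonic, so the solid-ball sub-mean value inequality applies. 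Two features of your proof are worth noting explicitly: the constant $C_r=e^{2Cr^2}/|B(z,r)|$ is uniform in $z$ precisely because you centered the quadratic at $z$, as you point out, and your argument uses only the upper curvature bound $dd^c\phi<C\omega_0$ --- the lower bound $c\omega_0<dd^c\phi$ in \eqref{PhiCond} plays no role in this lemma (it is needed elsewhere, e.g.\ for Lemmas \ref{OffDiagDecayLem} and \ref{LowerBoundForKernel}).
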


The basic outline of our arguments will be similar to the proofs in \cite{Z} for the classical Fock space (which themselves are based on the ideas in the seminal paper \cite{L}). However, note that in the classical Fock space situation (when $\phi(z) = \frac{\alpha}{2} |z|^2$ for some $\alpha > 0$), we have that \begin{equation*}  e^{-\phi(w)} |K(z, w)| e^{-\phi(z)} = e^{- \frac{\alpha}{2} |z - w|^2}. \end{equation*} Because of this, we will often have to make modifications to the arguments in \cite{Z}.

Now assume $\mu$ satisfies the condition that \begin{equation} \incn e^{-\gamma |z - w|} \, d\mu(w) < \infty \label{propertyM} \end{equation} for all $\gamma > 0$ and $z \in \C$. Note that Lemma \ref{OffDiagDecayLem} immediately tells us that $T_\mu$ is well defined on the span of $\{K(\cdot, w) : w \in \C\}$ if $\mu$ satisfies condition (\ref{propertyM}), so in particular $T_\mu$ is densely defined.

  Let $\widetilde{\mu}$ be the Berezin transform of $\mu$ defined by $\widetilde{\mu}(z) := \langle T_\mu k_z, k_z \rangle_{\Ftwophi}$  where $k_z$ is the normalized reproducing kernel of $\Ftwophi$. Note that (as one would expect), (\ref{propertyM}) in conjunction with Fubini's theorem gives us that \begin{equation*} \widetilde{\mu}(z) = \incn |k_z (w)|^2 e^{-2\phi(w)} \, d\mu(w). \end{equation*}

If $r > 0$ then for the remainder of this paper we will let $\{a_m\}$ denote any fixed arrangement of the lattice $r \Z$ (which is canonically treated as a subset of $\C$.)

\begin{lemma} \label{EquivOfBerTypeFunc} Suppose that $\mu \geq 0, r > 0, $ and $0 < p \leq 1.$  If $\mu$ satisfies condition $(\ref{propertyM})$, then the following are equivalent: \begin{itemize}
\item[(a)] $\widetilde{\mu} \in L^p(\C, dv)$
\item[(b)] $\mu(B(\cdot, r)) \in L^p(\C, dv)$
\item[(c)] $\{\mu(B(a_m, r))\} \in l^p $
\end{itemize}
\end{lemma}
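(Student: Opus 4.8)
The plan is to prove the equivalences by establishing a cycle of implications, exploiting the exponential off-diagonal decay of the kernel (Lemma \ref{OffDiagDecayLem}) and the lower bound (Lemma \ref{LowerBoundForKernel}) together with the subadditivity of $t \mapsto t^p$ for $0 < p \leq 1$, which is the feature that makes this regime tractable. I would prove (c) $\Rightarrow$ (b) $\Rightarrow$ (a) and then close the loop with (a) $\Rightarrow$ (c), or alternatively prove each of (a) and (b) equivalent to (c), since the lattice-sum condition (c) is the most rigid and is the natural hinge.

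Let me sketch the key steps.

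---

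**Equivalence of (b) and (c).** The passage between a continuous $L^p$ condition on $\mu(B(\cdot, r))$ and a discrete $\ell^p$ condition on $\{\mu(B(a_m, r))\}$ is a standard lattice-comparison argument. For the direction (c) $\Rightarrow$ (b), I would cover $\C$ by the balls $B(a_m, r)$ centered at the lattice points $a_m \in r\Z$; for $z$ in a fixed cube around $a_m$, monotonicity of measure gives $\mu(B(z, r)) \lesssim \mu(B(a_m, 2r))$, and then I would integrate and use subadditivity (since $p \leq 1$) to bound $\int \mu(B(z,r))^p \, dv(z)$ by a constant times $\sum_m \mu(B(a_m, 2r))^p$. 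The reverse inequality is symmetric. The only subtlety is controlling the finite overlap of the lattice balls (each point lies in boundedly many $B(a_m, 2r)$), which is where the geometry of $r\Z$ enters and makes the overlap constant depend only on dimension and $r$, not on $\mu$.

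**Equivalence of (a) and (c).** This is the analytically substantial part. For (c) $\Rightarrow$ (a), I would start from the Fubini expression $\widetilde{\mu}(z) = \int_{\C} |k_z(w)|^2 e^{-2\phi(w)} \, d\mu(w)$ and use $|k_z(w)|^2 e^{-2\phi(w)} = e^{-2\phi(z)}|K(z,w)|^2 e^{-2\phi(w)} / (K(z,z)e^{-2\phi(z)})$. By Lemma \ref{LowerBoundForKernel} the normalizing denominator is $\approx 1$, and by Lemma \ref{OffDiagDecayLem} the numerator decays like $e^{-2\epsilon_0 |z-w|}$. Splitting the integral over the annular shells $\{w : |z - w| \approx m\}$ and over lattice cells, I would bound $\widetilde{\mu}(z)$ by a weighted sum $\sum_m e^{-\epsilon_0 \operatorname{dist}(z, a_m)} \mu(B(a_m, r))$ (up to constants); raising to the $p$-th power, using subadditivity, integrating in $z$, and summing the resulting geometric series in the exponential weight yields $\|\widetilde{\mu}\|_{L^p}^p \lesssim \sum_m \mu(B(a_m, r))^p$, which is (c). For the converse (a) $\Rightarrow$ (c), I would go through (b): by Lemma \ref{LowerBoundForKernel} again, for $w \in B(z, \delta)$ the integrand $|k_z(w)|^2 e^{-2\phi(w)}$ is bounded below by a positive constant, so $\widetilde{\mu}(z) \gtrsim \mu(B(z, \delta))$ pointwise, giving $\mu(B(\cdot, \delta)) \lesssim \widetilde{\mu}$ and hence (a) $\Rightarrow$ (b) for radius $\delta$; a comparison of radii (covering $B(z,r)$ by finitely many $B(\cdot, \delta)$) then upgrades the radius, and (b) $\Rightarrow$ (c) finishes.

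---

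**The main obstacle** I expect is the summation of the exponentially weighted averages in the (c) $\Rightarrow$ (a) direction. Unlike the classical Fock space, where $|k_z(w)|^2 e^{-2\phi(w)} = e^{-\alpha|z-w|^2}$ is Gaussian and explicit, here I only have the one-sided estimate $\lesssim e^{-2\epsilon_0|z-w|}$, so I must organize the double sum $\int_{\C} \big(\sum_m e^{-\epsilon_0 \operatorname{dist}(z,a_m)} c_m\big)^p dv(z)$ carefully. The saving grace is $p \leq 1$: subadditivity lets me move the $p$-th power inside the sum, after which each term integrates to a constant and the discrete convolution with the summable kernel $\{e^{-\epsilon_0 \operatorname{dist}(a_k, a_m)}\}_m$ is controlled by Young-type estimates for $\ell^p$. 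Verifying that this kernel is summable uniformly and that the radius comparisons are independent of $\mu$ constitutes the bulk of the routine calculation, but no genuinely new idea beyond the three cited lemmas and subadditivity is needed.
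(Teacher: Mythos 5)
Your proposal is correct and rests on the same three pillars as the paper's proof (the decay estimate of Lemma \ref{OffDiagDecayLem}, the lower bound of Lemma \ref{LowerBoundForKernel}, and subadditivity of $t\mapsto t^p$ for $p\le 1$), but it organizes the key direction (c) $\Rightarrow$ (a) differently. The paper first applies Lemma \ref{PointwiseBound} to $k_z$ and Fubini's theorem, converting $\int |k_z|^2e^{-2\phi}\,d\mu$ into $\int \mu(B(u,r))\,|k_z(u)|^2e^{-2\phi(u)}\,dv(u)$, and only then discretizes over the lattice and invokes the kernel decay; you instead bound the integrand directly, $|k_z(w)|^2e^{-2\phi(w)}\lesssim e^{-2\epsilon_0|z-w|}$ (which needs both Lemma \ref{OffDiagDecayLem} and the normalization $K(z,z)e^{-2\phi(z)}\approx 1$ from Lemma \ref{LowerBoundForKernel}), and decompose the $\mu$-integral over lattice cells to get $\widetilde{\mu}(z)\lesssim \sum_m e^{-c|z-a_m|}\mu(B(a_m,Cr))$. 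Your route is somewhat more direct: it bypasses Lemma \ref{PointwiseBound} in this direction, and after subadditivity the $z$-integration of each term is immediate, so the Young-type convolution estimate you anticipate as the main obstacle is not actually needed. The paper's route, by contrast, makes the averaging function $\mu(B(\cdot,r))$ appear as a natural intermediary and parallels the classical Fock-space argument in \cite{Z}. Two smaller differences: you prove (b) $\Leftrightarrow$ (c) by the standard finite-overlap argument where the paper simply cites \cite{Z}, and you deduce (a) $\Rightarrow$ (c) via the pointwise bound $\widetilde{\mu}(z)\gtrsim \mu(B(z,\delta))$ and (b), whereas the paper restricts the integral to lattice balls and lands on (c) directly; these are the same use of Lemma \ref{LowerBoundForKernel}. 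One shared caveat: your assertion that the balls $B(a_m,r)$, $a_m\in r\Z$, cover $\C$ (and containments such as $B(z,r)\subset B(a_m,2r)$ for $z$ in the cell of $a_m$) holds literally only when $\sqrt{2n}\le 2$; the paper's proof makes the same implicit covering assumption, so this is a normalization to adjust (take the lattice spacing small relative to the ball radius), not a gap relative to the paper.
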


\begin{proof}

The equivalence of $(b)$ and $(c)$ for any $r > 0$ was proved in \cite{Z}, where it was also proved that $(b)$ and $(c)$ are in fact independent of $r > 0$.  Thus, we will complete the proof by showing that $(a) \Longleftrightarrow (c)$ for some $r > 0$.

First assume that $(c)$ is true.  Then by Lemma \ref{PointwiseBound} we have that \begin{align*} \widetilde{\mu}(z) & = \incn |k_z (w)|^2 e^{-2\phi(w)} \, d\mu(w) \\ & \lesssim \incn \left(\int_{B(w, r)} |k_z(u)|^2 e^{- 2 \phi(u)} \, dv(u) \right)\, d\mu(w) \\ & = \incn \left(\incn \chi_{B(u, r)} (w) |k_z(u)|^2 e^{- 2 \phi(u)}\, dv(u) \right)\, d\mu(w) \\  & = \incn \mu(B(u, r)) |k_z(u)|^2 \, e^{-2\phi(u)} dv(u) \\ & \lesssim \sum_{m = 1}^\infty \int_{B(a_m, r)} \mu(B(u, r)) e^{-\epsilon_0 |z - u|} \, dv(u) \end{align*} where the last inequality and $\epsilon_0 > 0$ follow from Lemma \ref{OffDiagDecayLem}. However, $B(u, r) \subset B(a_m, 2r)$ if $u \in B(a_m, r)$ so that \begin{equation} \widetilde{\mu}(z) \lesssim \sum_{m = 1}^\infty \mu(B(a_m, 2r)) \int_{B(a_m, r)}  e^{-\epsilon_0 |z - u|} \, dv(u). \label{EquivOfBerTypeFuncEq1} \end{equation}

Furthermore, since $0 < p \leq 1$, equation $(\ref{EquivOfBerTypeFuncEq1})$ gives us that \begin{equation} \incn (\widetilde{\mu}(z))^p \, dv(z) \lesssim \sum_{m = 1}^\infty (\mu(B(a_m, 2r)))^p \incn \left( \int_{B(a_m, r)}  e^{-\epsilon_0 |z - u|} \, dv(u) \right)^p \, dv(z) \label{EquivOfBerTypeFuncEq2}. \end{equation}

However, we can easily estimate the right hand side of $(\ref{EquivOfBerTypeFuncEq2})$ as follows.  First, it is obvious that \begin{equation}   \int_{B(a_m, 2r)} \left( \int_{B(a_m, r)}  e^{-\epsilon_0 |z - u|} \, dv(u) \right)^p \, dv(z) \lesssim r^{2n(p + 1)} \label{EquivOfBerTypeFuncEq3}.\end{equation} On the other hand, if $|z - a_m| \geq 2r$ and $|u - a_m| \leq r$, then \begin{equation*}  |z - a_m| \leq |z - u| + |u - a_m| \leq |z - u| + r \end{equation*} so that \begin{equation*} |z - u| \geq |z - a_m| - r \geq \frac{1}{2} |z - a_m|.  \end{equation*}  Thus, we have that
\begin{multline}
	\int_{\C \backslash B(a_m, 2r)} \left( \int_{B(a_m, r)}  e^{-\epsilon_0 |z - u|} \, dv(u) \right)^p \, dv(z) \\
	\lesssim r^{2np} \int_{\C \backslash B(a_m, 	2r)} e^{- \frac{\epsilon_0 p}{2} |z - a_m|} \, dv(z)
	\lesssim r^{2np} \label{EquivOfBerTypeFuncEq4}.
\end{multline}

Finally, combining $(\ref{EquivOfBerTypeFuncEq2})$ with $(\ref{EquivOfBerTypeFuncEq3})$ and $(\ref{EquivOfBerTypeFuncEq4})$ we have that \begin{equation*} \incn (\widetilde{\mu}(z))^p \, dv(z) \leq C_r  \sum_{m = 1}^\infty (\mu(B(a_m, 2r)))^p < \infty \end{equation*} for some $C_r > 0$ since $(c)$ is independent of $r > 0$.

We now complete the proof by showing that $(a) \Rightarrow (c)$ for $r  = \frac{\delta}{2}$ where $\delta$ is from Lemma \ref{OffDiagDecayLem}.  In particular, \begin{equation*} \incn (\widetilde{\mu}(z))^p \, dv(z) \gtrsim \sum_{m = 1}^\infty \int_{B(a_m, \delta/2)} (\widetilde{\mu}(z))^p \, dv(z). \end{equation*} Moreover, if $z \in B(a_m, \delta/2)$ then Lemma \ref{LowerBoundForKernel} gives us that \begin{equation*} \widetilde{\mu}(z) \geq \int_{B(a_m, \delta/2) } |k_z(u)|^2 \, e^{-2\phi(u)} \, d\mu(u) \gtrsim \mu(B(a_m, \delta/2)) \end{equation*} which immediately implies that $(c)$ is true with $r = \frac{\delta}{2}.$ \end{proof}

\begin{lemma} \label{BerInLpImpSchat}  Suppose that $\mu \geq 0$ and $\mu$ satisfies condition $(\ref{propertyM})$.  Then
\begin{itemize}
\item [(a)]  $ \ T_\mu \in S_p$ if $\widetilde{\mu} \in L^p(\C, dv)$ and $0 < p \leq 1,$
\item [(b)] $ \ \widetilde{\mu} \in L^p(\C, dv)$ if $T_\mu \in S_p$ and $1 \leq p < \infty$.
\end{itemize}
\end{lemma}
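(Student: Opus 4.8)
The plan is to treat the two assertions separately, with part (b) following from a convexity argument and part (a)---the harder one---from a discretization combined with an operator-domination estimate.

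For part (b), since $\|k_z\|_{\Ftwophi}=1$, the spectral theorem lets me apply Jensen's inequality to the convex function $t\mapsto t^p$ (for $p\ge1$): for the positive operator $T_\mu$ one has $\widetilde{\mu}(z)^p=\langle T_\mu k_z,k_z\rangle^p\le\langle T_\mu^p k_z,k_z\rangle$. I would integrate in $z$ and use Lemma \ref{LowerBoundForKernel} to replace $dv$ by the Plancherel measure $K(z,z)e^{-2\phi(z)}\,dv(z)$, then invoke the standard identity $\incn \langle A k_z,k_z\rangle K(z,z)e^{-2\phi(z)}\,dv(z)=\tr(A)$, valid for positive trace-class $A$ and proved by expanding $K_z$ in the orthonormal basis defining the kernel, with $A=T_\mu^p$. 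Since $T_\mu\in S_p$ gives $\tr(T_\mu^p)=\|T_\mu\|_{S_p}^p<\infty$, this yields $\incn \widetilde{\mu}^p\,dv\lesssim\|T_\mu\|_{S_p}^p$, so $\widetilde{\mu}\in L^p$. The only points needing care are the positivity/Fubini justifications and the fact that $T_\mu^p$ is trace class.

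For part (a), by Lemma \ref{EquivOfBerTypeFunc} it suffices to show that $\sum_m\mu(B(a_m,r))^p<\infty$ forces $T_\mu\in S_p$. I would fix a Borel partition $\{D_m\}$ of $\C$ with $D_m\subset B(a_m,r)$ and write $T_\mu=\sum_m T_{\mu_m}$ with $\mu_m=\mu|_{D_m}$. Since $0<p\le1$, the $p$-triangle inequality for the Schatten quasi-norm gives $\|T_\mu\|_{S_p}^p\le\sum_m\|T_{\mu_m}\|_{S_p}^p$, so it is enough to prove the uniform bound $\|T_{\mu_m}\|_{S_p}\lesssim\mu(B(a_m,r))$. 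To obtain this I would dominate $T_{\mu_m}$ by a canonical operator: for a unit vector $f$, $\langle T_{\mu_m}f,f\rangle=\int_{D_m}|f(w)|^2e^{-2\phi(w)}\,d\mu(w)$, and applying Lemma \ref{PointwiseBound} on each $B(w,s)\subset B(a_m,R)$ (with $R=r+s$) yields $\langle T_{\mu_m}f,f\rangle\lesssim\mu(D_m)\int_{B(a_m,R)}|f|^2e^{-2\phi}\,dv=\mu(D_m)\langle E_m f,f\rangle$, where $E_m=T_{\nu_m}$ is the Toeplitz operator with symbol $\nu_m=\chi_{B(a_m,R)}\,dv$. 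Thus $0\le T_{\mu_m}\le C\,\mu(B(a_m,r))\,E_m$ as operators, and by the monotonicity of singular values under the operator order ($0\le A\le B$ implies $\lambda_k(A)\le\lambda_k(B)$ for all $k$), $\|T_{\mu_m}\|_{S_p}\le C\,\mu(B(a_m,r))\,\|E_m\|_{S_p}$ for every $p>0$.

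It therefore remains to prove the uniform estimate $\sup_m\|E_m\|_{S_p}<\infty$, and this is the main obstacle. In the classical Fock setting the Gaussian kernel makes the singular values of the restriction operator $f\mapsto f|_{B(a_m,R)}$ explicitly computable and super-exponentially small, but here I can only use the reproducing-kernel estimates, which is precisely where the argument must depart from \cite{Z}. The plan is to realize $E_m$ as the integral operator on $\Ftwophi$ with kernel $K(z,w)\chi_{B(a_m,R)}(w)$ and, using the off-diagonal decay of Lemma \ref{OffDiagDecayLem} together with a finite-rank Taylor-polynomial approximation on the ball, to show that the eigenvalues $\lambda_k(E_m)$ decay fast enough that $\sum_k\lambda_k(E_m)^p\lesssim1$ uniformly in $m$, the uniformity coming from the translation-stability of the geometry. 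Controlling this decay from the kernel estimates alone, rather than from an explicit kernel, is the crux; granting it, summing gives $\|T_\mu\|_{S_p}^p\lesssim\sum_m\mu(B(a_m,r))^p<\infty$, which completes part (a).
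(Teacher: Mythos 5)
Your part (b) is correct and is essentially the paper's own argument: the paper proves the lemma by citing pp.~96--97 of \cite{Z}, observing that the only Fock-space-specific input needed to transplant that computation is $K(z,z)e^{-2\phi(z)} \approx 1$, which is Lemma \ref{LowerBoundForKernel}; the cited computation is exactly your convex-Jensen step $\widetilde{\mu}(z)^p \leq \langle T_\mu^p k_z, k_z\rangle$ followed by the trace identity $\tr(A) = \incn \langle A k_z, k_z\rangle K(z,z)e^{-2\phi(z)}\,dv(z)$ for positive $A$.

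Part (a) is where the genuine gap lies, and you have named it yourself: your entire argument reduces to the uniform estimate $\sup_m \|E_m\|_{S_p} < \infty$ for the model operators $E_m = T_{\chi_{B(a_m,R)}dv}$ with $0 < p \leq 1$, and this estimate is granted rather than proved. This is not a technical remainder; it is the whole difficulty of the case $p \leq 1$. For $p < 1$ the quasi-triangle inequality does not pass to continuous superpositions, so the representation of $E_m$ as an integral of the rank-one operators $K(\cdot,w)\otimes K(\cdot,w)$ yields no $S_p$ bound; in the classical case $\phi(z) = \frac{\alpha}{2}|z|^2$ the estimate is rescued by translation invariance (Weyl operators make every $E_m$ unitarily equivalent to a single radial operator whose eigenvalues are explicit and decay super-exponentially), and that is precisely the structure missing here. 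Worse, the estimate you need is itself an instance of the statement being proved: writing $\nu_m = \chi_{B(a_m,R)}dv$, Lemma \ref{OffDiagDecayLem} gives $\widetilde{\nu}_m(z) \leq C_R\, e^{-\epsilon_0 |z - a_m|}$, so $\widetilde{\nu}_m \in L^p(\C, dv)$ with norm bounded uniformly in $m$, and ``$\widetilde{\nu}_m \in L^p \Rightarrow E_m \in S_p$ with uniform bounds'' is exactly part (a) applied to $\nu_m$; your Taylor-polynomial/finite-rank sketch would have to produce uniform exponential eigenvalue decay for $E_m$ from the kernel estimates alone, and no such argument is given. The repair is the concave twin of your own part (b), and it is what the pages of \cite{Z} cited by the paper actually do: first, $\widetilde{\mu}\in L^p$ implies the averages $\mu(B(a_m,r))$ are bounded (Lemma \ref{EquivOfBerTypeFunc}), so $T_\mu$ is bounded by Theorem 1 of \cite{SV}; then, since $t\mapsto t^p$ is concave for $0<p\leq 1$, Jensen's inequality reverses to $\langle T_\mu^p k_z, k_z\rangle \leq \widetilde{\mu}(z)^p$, and the same trace identity (which holds for every positive operator with both sides possibly infinite, so there is no circularity in applying it before knowing $T_\mu^p$ is trace class) gives $\|T_\mu\|_{S_p}^p = \tr(T_\mu^p) = \incn \langle T_\mu^p k_z,k_z\rangle K(z,z) e^{-2\phi(z)}\,dv(z) \lesssim \incn \widetilde{\mu}^p\,dv < \infty$. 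No localization and no model operators are needed.
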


\begin{proof} Since $\widetilde{\mu} \in L^p(\C, dv)$ implies that $\{\mu(B(a_m, r))\}$ is bounded by Lemma \ref{EquivOfBerTypeFunc}, we first of all have that $T_\mu$ is bounded on $\Ftwophi$ by Theorem $1$ in \cite{SV}.  Furthermore, since $\sqrt{K(z, z)} \approx e^{\phi(z)}$, one can repeat virtually word for word the arguments on pp.~96--97 in \cite{Z} to complete the proof.\end{proof}

We will need one more lemma before we prove the main result of this section.  Note that the proof of this lemma follows from standard ideas in frame theory (see the proof of Lemma $3.7$ in \cite{LR} for example), though we will include its simple proof for the sake of completion.

\begin{lemma} \label{FrameOpBdd}  Let $r > 0$ and let $\{e_m\}$ be any orthonormal basis for $\Ftwophi$.  If $\{\xi_m\} \subset r \Z$ and $A$ is the operator on $\Ftwophi$ defined by $A e_m := k_{\xi_m}$ then $A$ extends to a bounded operator on all of $\Ftwophi$ whose operator norm is bounded above by a constant that only depends on $r$.
\end{lemma}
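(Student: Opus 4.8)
The plan is to establish the boundedness of $A$ by exhibiting its formal adjoint as a bounded operator and then invoking duality. First I would identify the candidate adjoint. By the reproducing property together with the normalization $k_z = K(\cdot, z)/\sqrt{K(z,z)}$, for any $f \in \Ftwophi$ one has
\[
\langle f, A e_m \rangle = \langle f, k_{\xi_m}\rangle = \frac{f(\xi_m)}{\sqrt{K(\xi_m, \xi_m)}}.
\]
I would then define $B$ on $\Ftwophi$ by $Bf = \sum_m \langle f, k_{\xi_m}\rangle e_m$, so that $\langle A e_m, g\rangle = \langle e_m, B g\rangle$ for every $m$ and $g$. Once $B$ is shown to be bounded, this identity extends by linearity to every finite combination $f = \sum c_m e_m$, giving $\langle Af, g\rangle = \langle f, Bg\rangle$; hence $\|Af\| = \sup_{\|g\|\le 1}|\langle f, Bg\rangle| \le \|B\|\,\|f\|$, and $A$ extends to a bounded operator with $\|A\| \le \|B\|$.

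It therefore remains to bound $\|Bf\|^2 = \sum_m |\langle f, k_{\xi_m}\rangle|^2$. Since $K(z,z)e^{-2\phi(z)} \approx 1$ uniformly in $z$ by Lemma \ref{LowerBoundForKernel}, the comparison $|\langle f, k_{\xi_m}\rangle|^2 \approx |f(\xi_m) e^{-\phi(\xi_m)}|^2$ holds with constants independent of $m$. Applying Lemma \ref{PointwiseBound} with radius $r/2$ then yields
\[
\|Bf\|^2 \lesssim \sum_m |f(\xi_m) e^{-\phi(\xi_m)}|^2 \lesssim C_{r/2} \sum_m \int_{B(\xi_m, r/2)} |f(w) e^{-\phi(w)}|^2 \, dv(w).
\]
Here I would use the crucial geometric fact that distinct points of the lattice $r\Z$ are separated by at least $r$, so the balls $B(\xi_m, r/2)$ are pairwise disjoint. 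Consequently the sum of integrals collapses into a single integral, and
\[
\|Bf\|^2 \lesssim C_{r/2} \incn |f(w) e^{-\phi(w)}|^2 \, dv(w) = C_{r/2}\, \|f\|_{\Ftwophi}^2.
\]
This shows $B$ (hence $A$) is bounded with norm controlled solely by $r$: the comparison constants from Lemma \ref{LowerBoundForKernel} depend only on $\phi$, while $C_{r/2}$ depends only on $r$.

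I do not expect a genuine obstacle here, as the argument is a routine frame-theoretic computation. The only points demanding care are verifying that the kernel-comparison and pointwise-sampling constants are uniform in $m$ (so that the final constant depends on $r$ alone), and observing the disjointness of the balls $B(\xi_m, r/2)$, which is precisely where the lattice structure of $\{\xi_m\} \subset r\Z$ enters and is what prevents the sum from overcounting the $L^2$ mass of $f$.
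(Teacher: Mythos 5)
Your proof is correct and is essentially the paper's own argument: the paper bounds $|\langle Af,g\rangle|$ directly by Cauchy--Schwarz and then establishes exactly your frame inequality $\sum_m |g(\xi_m)e^{-\phi(\xi_m)}|^2 \lesssim \|g\|_{\Ftwophi}^2$ via Lemmas \ref{LowerBoundForKernel} and \ref{PointwiseBound}, so your adjoint operator $B$ is just a repackaging of that same estimate. The only cosmetic difference is that you sample over radius $r/2$ to get pairwise disjoint balls, whereas the paper integrates over the balls $B(\xi_m,r)$, which merely have bounded overlap.
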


\begin{proof}  If $f, g \in \Ftwophi$, then the Cauchy-Schwarz inequality, Lemma \ref{PointwiseBound}, and the reproducing property  gives us that \begin{align*} |\langle Af, g\rangle| & \leq \sum_{m = 1} ^\infty |\langle f, e_m\rangle_\phi   \langle k_{\xi_m}, g\rangle_\phi| \\ & \leq \|f\|_{\Ftwophi} \left(\sum_{m = 1} ^\infty  |\langle k_{\xi_m}, g\rangle_\phi|^2\right)^\frac{1}{2} \\ & \approx  \|f\|_{\Ftwophi} \left(\sum_{m = 1} ^\infty |g(\xi_m) e^{-\phi(\xi_m)}|^2 \right)^\frac{1}{2} \\ & \lesssim \|f\|_{\Ftwophi} \left(\sum_{m = 1} ^\infty \int_{B(\xi_m, r)} |g(u) e^{-\phi(u)}|^2 \, dv(u)\right)^\frac{1}{2} \lesssim \|f\|_{\Ftwophi} \|g\|_{\Ftwophi} \end{align*} \end{proof}

Note that $\|\cdot \|_{S_p}$ is not a norm when $p < 1$.  However, it is well known that if $A$ and $B$ are compact, then \begin{equation*} s_{m + n - 1} (A + B) \leq s_m(A) + s_n(B) \end{equation*} where $s_k(T)$ is the $k^\text{th}$ singular value of a compact operator $T$.  Thus, it is easy to see that for all $0 < p \leq 1$ we have \begin{equation} \label{QuasiTriIneq} \|A + B \|_{S_p} ^p \leq 2 (\|A\|_{S_p}^p + \|B\|_{S_p} ^p)  \end{equation} for any $A, B \in S_p$.

\begin{theorem} \label{MainThmForpleq1}  Suppose $\mu \geq 0, 0 < p \leq 1, $ and $\mu$ satisfies condition $(\ref{propertyM})$. Then the following are equivalent:
\begin{itemize}
\item[(a)] $T_\mu \in S_p$
\item[(b)] $\widetilde{\mu} \in L^p(\C, dv)$
\item[(c)] $\mu(B(\cdot, r)) \in L^p(\C, dv)$
\item[(d)] $\{\mu(B(a_m, r))\} \in l^p $
\end{itemize}
\end{theorem}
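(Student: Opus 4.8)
The bulk of the work has already been done by the preceding lemmas, so the plan is first to record what is free and then to isolate the one implication that genuinely requires a new argument. By Lemma \ref{EquivOfBerTypeFunc} the three ``size'' conditions (b), (c) and (d) are equivalent (and independent of $r > 0$), while Lemma \ref{BerInLpImpSchat}(a) supplies the implication (b) $\Rightarrow$ (a) for the whole range $0 < p \le 1$. Hence the theorem follows once I establish the reverse implication (a) $\Rightarrow$ (d). Observe that Lemma \ref{BerInLpImpSchat}(b) already yields (a) $\Rightarrow$ (b) when $p = 1$, so the real content lies in the range $0 < p < 1$, where the naive ``diagonal is dominated by the singular values'' estimate fails because $t \mapsto t^p$ is concave rather than convex.

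The plan for (a) $\Rightarrow$ (d) is to transport the problem to an honest orthonormal basis by means of the frame operator. Fix an orthonormal basis $\{e_m\}$ of \Ftwophi{} and let $A$ be the operator of Lemma \ref{FrameOpBdd} associated to the full lattice $r\Z$, i.e. $A e_m = k_{a_m}$; by that lemma $A$ is bounded with $\|A\|$ depending only on $r$. Set $S := A^* \Tf A$ with $f$ replaced by $\mu$, that is $S := A^* T_\mu A$. Since $\mu \ge 0$ we have $\langle T_\mu h, h\rangle = \int_{\C} |h(w)|^2 e^{-2\phi(w)}\, d\mu(w) \ge 0$, so $T_\mu \ge 0$ and therefore $S \ge 0$. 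Moreover, because $S_p$ is a (quasi-normed) operator ideal, $T_\mu \in S_p$ forces $S \in S_p$ with $\|S\|_{S_p} \le \|A\|^2 \|T_\mu\|_{S_p}$. The decisive observation is that the diagonal of $S$ in the basis $\{e_m\}$ reproduces the Berezin transform on the lattice:
\[
\langle S e_m, e_m\rangle = \langle T_\mu A e_m, A e_m\rangle = \langle T_\mu k_{a_m}, k_{a_m}\rangle = \widetilde{\mu}(a_m).
\]

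The key inequality is a trace estimate valid precisely because $S$ is positive and $0 < p \le 1$. By the spectral theorem and the scalar Jensen inequality applied to the concave function $t \mapsto t^p$, every unit vector $e$ satisfies $\langle S^p e, e\rangle \ge \langle S e, e\rangle^p$; summing over the orthonormal basis $\{e_m\}$ gives
\[
\sum_{m} \langle S e_m, e_m\rangle^p \le \sum_m \langle S^p e_m, e_m\rangle = \tr(S^p) = \|S\|_{S_p}^p .
\]
Combining this with the identity above and the norm bound on $S$ yields $\sum_m \widetilde{\mu}(a_m)^p \lesssim \|T_\mu\|_{S_p}^p < \infty$. Finally, exactly as in the last paragraph of the proof of Lemma \ref{EquivOfBerTypeFunc} (take $r = \delta/2$ with $\delta$ as in Lemma \ref{LowerBoundForKernel}), one has $\widetilde{\mu}(a_m) \gtrsim \mu(B(a_m, \delta/2))$, so that $\{\mu(B(a_m, \delta/2))\} \in l^p$; since (d) is independent of $r$, this is condition (d) and the equivalence is complete.

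I expect the main obstacle to be precisely this positivity-based trace step: for $0 < p < 1$ the bound $\sum_m |\langle S e_m, e_m\rangle|^p \le \|S\|_{S_p}^p$ is \emph{false} for a general $S \in S_p$, and it is only the positivity of $S = A^* T_\mu A$ (inherited from $\mu \ge 0$) together with the operator concavity of $t \mapsto t^p$ that rescues it. An alternative route, should one prefer to avoid the operator Jensen inequality, is to split $r\Z$ into finitely many well-separated sublattices, estimate the corresponding pieces of $T_\mu$ individually, and recombine them using the quasi-triangle inequality (\ref{QuasiTriIneq}); in either approach the frame operator $A$ is the mechanism that converts the non-orthonormal reproducing kernels $\{k_{a_m}\}$ into a genuine orthonormal system on which Schatten bounds can be read off.
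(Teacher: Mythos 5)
Your reduction of the theorem to the single implication (a) $\Rightarrow$ (d) is correct and matches the paper, but the central step of your argument is false: Jensen's inequality goes the other way. For a positive compact operator $S$, a unit vector $e$, and the \emph{concave} function $t \mapsto t^p$ with $0 < p \le 1$, the spectral theorem gives
\begin{equation*}
\langle S^p e, e\rangle \leq \langle S e, e\rangle^p ,
\end{equation*}
not $\geq$; the inequality you wrote is the one for convex functions, i.e.\ for $p \ge 1$ (which is exactly why Lemma \ref{BerInLpImpSchat}(b) covers only $p \ge 1$). Summing your corrected inequality over an orthonormal basis yields $\tr(S^p) \le \sum_m \langle S e_m, e_m\rangle^p$: for $0 < p < 1$ the diagonal $\ell^p$-sum \emph{dominates} $\|S\|_{S_p}^p$ rather than being controlled by it. The bound you actually need, $\sum_m \langle S e_m, e_m\rangle^p \le \|S\|_{S_p}^p$, is false even for positive operators: take $S$ to be the rank-one orthogonal projection onto the vector $N^{-1/2}(1, 1, \dots, 1)$ in $\mathbb{C}^N$; then $\|S\|_{S_p}^p = 1$ while $\sum_{m=1}^N \langle S e_m, e_m\rangle^p = N^{1-p} \to \infty$ as $N \to \infty$. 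So positivity does not rescue the estimate --- it is precisely what produces the reverse inequality --- and the sentence in your final paragraph claiming that positivity plus operator concavity saves the bound is mistaken.

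This failure is exactly why the paper's proof (following Luecking and Zhu) is more elaborate. Since diagonal truncation does not decrease the $S_p$ quasi-norm when $p < 1$, one must show instead that a suitably conjugated operator is \emph{almost diagonal}. The paper partitions $r\Z$ into $N$ sublattices $\{\xi_m\}$ that are $R$-separated for a large $R$, truncates the measure to $\nu = \sum_m \mu \chi_{B(\xi_m, \delta)}$ (so that $0 \le T_\nu \le T_\mu$ and hence $\|T_\nu\|_{S_p} \le \|T_\mu\|_{S_p}$), and splits $T = A^* T_\nu A = D + E$ with $D$ the diagonal part in the basis $\{e_m\}$. Because $D$ is genuinely diagonal, $\|D\|_{S_p}^p$ equals its diagonal $\ell^p$-sum, and the quasi-triangle inequality (\ref{QuasiTriIneq}) gives $\|T\|_{S_p}^p \ge \tfrac12 \|D\|_{S_p}^p - \|E\|_{S_p}^p$; the whole point is then that the off-diagonal part satisfies $\|E\|_{S_p}^p \le C_2 e^{-\epsilon_0 p R/2} \sum_j \mu(B(\xi_j, \delta))^p$, by Proposition 1.29 of \cite{Z2} together with Lemma \ref{OffDiagDecayLem} and the separation $R$, so that for $R$ large the diagonal term wins. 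The ``alternative route'' you mention in a single sentence at the end is in fact the only workable route here, and it requires the sublattice separation, the truncation to $\nu$, and the off-diagonal Schatten estimate --- none of which your proposal carries out.
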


\begin{proof} By Lemmas \ref{EquivOfBerTypeFunc} and \ref{BerInLpImpSchat}, it is enough to show that $(a) \Rightarrow (d)$ for $r = \delta$ where again $\delta$ is from Lemma \ref{LowerBoundForKernel}. For that matter, pick some large $R > 2\delta$ (to be determined later) and partition $\{a_m\}$ into $N$ sublattices $\{\xi_m\}$ where $m \neq k \Rightarrow |\xi_m - \xi_k| > R$.  Furthermore, let \begin{equation*} \nu := \sum_{m = 1}^\infty \mu \chi_{B(\xi_m, \delta)}. \end{equation*}  Clearly $T_\nu \leq T_\mu$ so that $\|T_\nu\|_{S_p} \leq \|T_\mu\|_{S_p}$.

Now fix any orthonormal basis $\{e_m\}$ of $\Ftwophi$ and let $A$ be the operator on $\Ftwophi$ defined by $A e_m := k_{\xi_m}$ (which by Lemma \ref{FrameOpBdd} has operator norm that is bounded above by a constant that is independent of $\{\xi_m\}$).  Now let $T := A^* T_\nu A$ so that \begin{equation*} \|T\|_{S_p}  \lesssim \|T_\nu\|_{S_p} \leq \|T_\mu\|_{S_p}. \end{equation*}  Furthermore, define \begin{equation*} Df := \sum_{m = 1}^\infty \langle Te_m, e_m\rangle_\phi \langle f, e_m\rangle_\phi e_m  \end{equation*} and $E := T - D$ so that by $(\ref{QuasiTriIneq})$ we have \begin{equation*} \|T_\mu\|_{S_p} ^p \gtrsim \|T\|_{S_p} ^p \geq  \frac{1}{2} \|D\|_{S_p}^p - \|E\|_{S_p}^p. \end{equation*}   Then since $D$ is diagonal, we have from Lemma \ref{LowerBoundForKernel} that \begin{align} \|Df \|_{S_p} ^p & = \sum_{m = 1}^\infty \langle Te_m, e_m \rangle_\phi ^p \label{MainThmForpleq1DiagEst} \\     & =    \sum_{m = 1}^\infty \langle T_\nu k_{\xi_m}, k_{\xi_m} \rangle_\phi  ^p \nonumber \\ & = \sum_{m = 1}^\infty \left(\incn |k_{\xi_m} (u)|^2 \, e^{-2\phi (u)} d\nu(u)\right)^p \nonumber \\ & \geq  \sum_{m = 1}^\infty \left(\int_{B(\xi_m, \delta)} |k_{\xi_m} (u)|^2 \, e^{-2\phi (u)} d\mu(u)\right)^p \nonumber \\ & \geq C_1 \sum_{m = 1}^\infty \mu(B(\xi_m, \delta))^p \nonumber \end{align} for some $C_1 > 0$ independent of $N$.

We will now get an upper bound for $\|E\|_{S_p} ^p$.   By Proposition $1.29$ in \cite{Z2} and Lemma \ref{OffDiagDecayLem}, we have that \begin{align} \|E\|_{S_p}^p &\leq \sum_{m \neq k}|\langle T e_m, e_k\rangle_\phi|^p    \nonumber \\ & = \sum_{m \neq k} |\langle T_\nu k_{\xi_m}, k_{\xi_k} \rangle_\phi|^p  \nonumber \\ & \leq \sum_{m \neq k} \left(\incn |k_{\xi_m}(u) k_{\xi_k} (u)| e^{-2\phi(u)} \, d\nu(u) \right)^p \nonumber \\ & \lesssim \sum_{m \neq k} \left(\incn e^{- \epsilon_0 |u - \xi_m|} e^{-\epsilon_0 |u - \xi_k|} \, d\nu(u) \right)^p \label{MainThmForpleq1OffDiagEst1}.\end{align}

Now if $m \neq k$ then $|\xi_m - \xi_k| \geq R$.  Thus, if $|u - \xi_m| \leq \frac{R}{2}$, then the triangle inequality gives us that $|u - \xi_k| \geq \frac{R}{2}$.  Plugging this into (\ref{MainThmForpleq1OffDiagEst1}) gives us that \begin{equation} \|E\|_{S_p}^p \lesssim e^{- \frac{\epsilon_0 p R}{2}} \sum_{m \neq k} \left(\incn e^{- \frac{\epsilon_0}{2} |u - \xi_m|} e^{-\frac{\epsilon_0}{2} |u - \xi_k|} \, d\nu(u) \right)^p. \label{MainThmForpleq1OffDiagEst2} \end{equation}

Since $\nu$ is supported on $\bigcup_{j = 1}^\infty B(\xi_j, \delta)$, we have that \begin{equation}\incn e^{- \frac{\epsilon_0}{2} |u - \xi_m|} e^{-\frac{\epsilon_0}{2} |u - \xi_k|} \, d\nu(u) = \sum_{j = 1}^\infty \int_{B(\xi_j, \delta)} e^{- \frac{\epsilon_0}{2} |u - \xi_m|} e^{-\frac{\epsilon_0}{2} |u - \xi_k|} \, d\mu(u). \label{MainThmForpleq1OffDiagEst3} \end{equation} Moreover, if $j \neq m$ and $|u - \xi_j| < \delta$ then \begin{equation*} |\xi_j - \xi_m| \leq |\xi_j - u| + |u - \xi_m| \leq \delta + |u - \xi_m|.  \end{equation*} Thus, as $|\xi_j - \xi_m| > R \geq 2\delta$ we have that \begin{equation*}|u - \xi_m| \geq  |\xi_j - \xi_m| - \delta \geq \frac{1}{2} |\xi_j - \xi_m| \end{equation*} and clearly we have a similar estimate for $|u - \xi_k|$.

Plugging this into $(\ref{MainThmForpleq1OffDiagEst3})$ gives us that \begin{equation} \incn e^{- \frac{\epsilon_0}{2} |u - \xi_m|} e^{-\frac{\epsilon_0}{2} |u - \xi_k|} \, d\nu(u) \leq  \sum_{j = 1}^\infty \mu(B(\xi_j, \delta)) e^{-\frac{\epsilon_0  }{4} |\xi_j - \xi_m|} e^{-\frac{\epsilon_0  }{4} |\xi_j - \xi_k|}. \label{MainThmForpleq1OffDiagEst4} \end{equation} Thus, since $0 < p \leq 1$, we can plug $(\ref{MainThmForpleq1OffDiagEst4})$ into $(\ref{MainThmForpleq1OffDiagEst2})$ to get that \begin{align*} \|E\|_{S_p} ^p & \lesssim e^{- \frac{\epsilon_0 p R}{2}}\sum_{j = 1}^\infty \mu(B(\xi_j, \delta))^p  \sum_{m \neq k} e^{-\frac{\epsilon_0 p }{4} |\xi_j - \xi_m|} e^{-\frac{\epsilon_0 p }{4} |\xi_j - \xi_k|} \\ & \leq e^{- \frac{\epsilon_0 p R}{2}}\sum_{j = 1}^\infty \mu(B(\xi_j, \delta))^p  \left(\sum_{m = 1}^\infty e^{-\frac{\epsilon_0 p }{4} |\xi_j - \xi_m|} \right)^2 \\ & \lesssim e^{- \frac{\epsilon_0 p R}{2}}\sum_{j = 1}^\infty \mu(B(\xi_j, \delta))^p  \end{align*} which means that there exists $C_2 > 0$ independent of $N$ where \begin{equation} \|E\|_{S_p} ^p  \leq C_2 e^{- \frac{\epsilon_0 p R}{2}} \sum_{j = 1}^\infty \mu(B(\xi_j, \delta))^p. \label{MainThmForpleq1OffDiagEst5} \end{equation}

Combining $(\ref{MainThmForpleq1DiagEst})$ and $(\ref{MainThmForpleq1OffDiagEst5})$ we have that \begin{equation*}\|T_\mu\|_{S_p}^p  \geq \left(\frac{1}{2} C_1 - C_2 e^{-\frac{\epsilon_0 p R}{2}}\right)  \sum_{j = 1}^\infty \mu(B(\xi_j, \delta))^p \end{equation*} so setting $R$ large enough gives us that \begin{equation}  \sum_{j = 1}^\infty \mu(B(\xi_j, \delta))^p \lesssim \|T_\mu\|_{S_p} ^p \label{MainThmForpleq1FinalEst}\end{equation} for all $\mu$ where $\{\mu(B(\xi_j, \delta))\} \in l^p$.  However, an easy approximation argument gives us that $(\ref{MainThmForpleq1FinalEst})$ holds for all positive Borel measures $\mu$ with $T_\mu \in S_p$.

Finally, since  $(\ref{MainThmForpleq1FinalEst})$ holds for each of the $N$ sublattices of $\{a_m\}$ and since $N$ obviously only depends on $R$, we get that \begin{equation*} \sum_{m = 1}^\infty \mu(B(a_m, \delta))^p \lesssim  \|T_\mu\|_{S_p} ^p \end{equation*} which completes the proof. \end{proof}

\section{The case $p\geq 1$}\label{case2}

In this section we will consider the simpler case of $p\geq 1$.  As with the case $0 < p \leq 1$, the approach is quite similar to the standard Fock space situation.  We will need one preliminary result before we prove our main result.

\begin{lemma} \label{finLpImpTfinSp}
If $p\geq 1$ and $f \in L^p(\C, dv)$, then $T_f \in S_p$.
\end{lemma}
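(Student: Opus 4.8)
The plan is to establish the two endpoint cases $p = 1$ and $p = \infty$ and then recover the whole range $1 < p < \infty$ by complex interpolation of the linear map $f \mapsto T_f$. Note first that for $p \geq 1$ one has $L^p(\C, dv) \subset L^1(\C, dv) + L^\infty(\C, dv)$, so it makes sense to regard $f \mapsto T_f$ as a single linear map defined consistently on $L^1 + L^\infty$ (the defining integral agrees on the overlap), which is exactly what an interpolation argument requires.

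For the endpoint $p = \infty$ I would observe that for $g \in \Ftwophi$ the defining integral gives $T_f g = P(fg)$, that is, $T_f = P M_f|_{\Ftwophi}$, where $M_f$ denotes multiplication by $f$ on $L^2(e^{-2\phi}\,dv)$ and $P$ is the orthogonal projection onto $\Ftwophi$. Since $\|M_f\| = \|f\|_{L^\infty}$ and $\|P\| = 1$, this immediately yields $\|T_f\| \leq \|f\|_{L^\infty}$, so $f \mapsto T_f$ is bounded from $L^\infty(\C, dv)$ into $S_\infty = B(\Ftwophi)$.

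For the endpoint $p = 1$ I would exploit the rank-one structure of $T_f$. Writing $K_w := K(\cdot, w)$ and letting $K_w \otimes K_w$ be the rank-one operator $g \mapsto \langle g, K_w\rangle_\phi K_w = g(w) K_w$, the defining integral for $T_f$ is exactly
\[
	T_f = \incn f(w)\, (K_w \otimes K_w)\, e^{-2\phi(w)}\, dv(w).
\]
Since $\|K_w \otimes K_w\|_{S_1} = \|K_w\|_\phi^2 = K(w,w)$, one formally expects $\|T_f\|_{S_1} \leq \incn |f(w)|\, K(w,w)\, e^{-2\phi(w)}\, dv(w) \approx \|f\|_{L^1}$, the last step using $K(w,w)e^{-2\phi(w)} \approx 1$ from Lemma \ref{LowerBoundForKernel}. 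To make this rigorous rather than manipulate an operator-valued integral, I would use the characterization of the trace norm as a supremum of $\sum_k |\langle T_f u_k, v_k\rangle_\phi|$ over orthonormal sequences $\{u_k\}, \{v_k\}$: expanding $\langle T_f u_k, v_k\rangle_\phi = \incn f(w)\, \langle u_k, K_w\rangle_\phi \langle K_w, v_k\rangle_\phi\, e^{-2\phi(w)}\, dv(w)$, then applying Cauchy--Schwarz in $k$ followed by Bessel's inequality (which bounds $\sum_k |\langle u_k, K_w\rangle_\phi|^2$ and $\sum_k |\langle K_w, v_k\rangle_\phi|^2$ by $\|K_w\|_\phi^2 = K(w,w)$), produces precisely the bound $\incn |f| K(w,w)\, e^{-2\phi}\, dv$ uniformly in the chosen sequences. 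Taking the supremum gives the $S_1$ estimate.

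Finally, having shown that $f \mapsto T_f$ is bounded $L^1 \to S_1$ and $L^\infty \to S_\infty$, complex interpolation yields boundedness $L^p \to S_p$ for $1 < p < \infty$, since $[L^1, L^\infty]_\theta = L^p$ and $[S_1, S_\infty]_\theta = S_p$ for $\tfrac{1}{p} = 1 - \theta$. The main point requiring care is the $p = 1$ estimate, specifically justifying the trace-class bound through the orthonormal-sequence characterization rather than through a formal integral of rank-one operators; once both endpoints are secured, the interpolation step is entirely routine.
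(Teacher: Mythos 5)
Your proposal is correct and takes essentially the same route as the paper: the paper's proof simply defers to the argument on p.~245 of \cite{Z}, which is exactly this interpolation scheme (boundedness $L^1 \to S_1$ and $L^\infty \to S_\infty$, then complex interpolation), with Lemma \ref{LowerBoundForKernel} supplying the estimate $K(w,w)e^{-2\phi(w)} \approx 1$ that makes the $S_1$ endpoint work in the generalized setting. The only cosmetic difference is that the paper first reduces to $f \geq 0$ (so the $S_1$ bound becomes a trace computation for a positive operator), whereas you treat general $f$ directly via the orthonormal-sequence characterization of the trace norm.
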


\begin{proof}
Clearly without loss of generality we can assume that $f \geq 0$.  If $\mu = f \, dv$ then clearly we have that $\{\mu(B(a_m, r))\} \in \ell^p$ if $f \in L^p(\C, dv)$ so that $T_f$ is bounded on $\Ftwophi$.  The proof now follows immediately by Lemma \ref{LowerBoundForKernel} in conjunction with the arguments on p. $245$ in \cite{Z} that are used to prove this result in the classical Fock space setting.
\end{proof}

\begin{theorem}
Suppose $\mu\geq0$, $p\geq 1$, and $\mu$ satisfies condition~\eqref{propertyM}. Then the following are equivalent:
\begin{itemize}
\item[(a)] $T_{\mu}$ is in the Schatten class $S_p$;
\item[(b)] $\widetilde{\mu} \in L^p(\C, dv)$;
\item[(c)] $\mu(B(\cdot, r)) \in L^p(\C, dv)$;
\item[(d)] $\{\mu(B(a_m, r))\} \in l^p $.
\end{itemize}
\end{theorem}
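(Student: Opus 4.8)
The plan is to close the implication cycle $(a)\Rightarrow(b)\Rightarrow(c)\Leftrightarrow(d)\Rightarrow(a)$, and to observe that two of these links are essentially free. The implication $(a)\Rightarrow(b)$ is precisely part $(b)$ of Lemma~\ref{BerInLpImpSchat}, which is stated for $1\leq p<\infty$. The equivalence $(c)\Leftrightarrow(d)$, together with the fact that conditions $(c)$ and $(d)$ do not depend on the choice of $r>0$, was proved in \cite{Z} and recorded in the proof of Lemma~\ref{EquivOfBerTypeFunc}; this is a covering/lattice argument valid for all $p>0$, so it applies for $p\geq1$. Thus the work reduces to establishing $(b)\Rightarrow(c)$ and $(c)\Rightarrow(a)$.

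For $(b)\Rightarrow(c)$ I would use a pointwise lower bound on the Berezin transform. Taking $\delta$ as in Lemma~\ref{LowerBoundForKernel} and writing $k_z(w)=K(w,z)/\sqrt{K(z,z)}$, the two estimates $K(z,z)e^{-2\phi(z)}\approx1$ and $e^{-\phi(w)}|K(z,w)|e^{-\phi(z)}\gtrsim1$ (for $w\in B(z,\delta)$) combine to give $|k_z(w)|^2 e^{-2\phi(w)}\gtrsim1$ on $B(z,\delta)$. Restricting the defining integral for $\widetilde{\mu}$ then yields
\begin{equation*}
\widetilde{\mu}(z)=\incn|k_z(w)|^2 e^{-2\phi(w)}\,d\mu(w)\geq\int_{B(z,\delta)}|k_z(w)|^2 e^{-2\phi(w)}\,d\mu(w)\gtrsim\mu(B(z,\delta)),
\end{equation*}
so $\widetilde{\mu}\in L^p(\C,dv)$ forces $\mu(B(\cdot,\delta))\in L^p(\C,dv)$, which is $(c)$ for $r=\delta$ and hence for all $r$ by the $r$-independence noted above.

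The substantive step is $(c)\Rightarrow(a)$, and here the case $p\geq1$ becomes genuinely simpler than the case $p\leq1$ because I can exploit Lemma~\ref{finLpImpTfinSp} through an operator-domination argument rather than a lattice decomposition. Set $f(z):=\mu(B(z,r))$, which lies in $L^p(\C,dv)$ by $(c)$, and let $T_f=T_{f\,dv}$. For $g$ in the dense domain of $T_\mu$, Lemma~\ref{PointwiseBound} and Tonelli's theorem give
\begin{align*}
\langle T_\mu g,g\rangle_\phi=\incn|g(w)|^2 e^{-2\phi(w)}\,d\mu(w) &\lesssim\incn\left(\int_{B(w,r)}|g(u)|^2 e^{-2\phi(u)}\,dv(u)\right)d\mu(w) \\
&=\incn|g(u)|^2 e^{-2\phi(u)}\,\mu(B(u,r))\,dv(u)=\langle T_f g,g\rangle_\phi,
\end{align*}
where the middle equality uses $\chi_{B(w,r)}(u)=\chi_{B(u,r)}(w)$. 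Hence $0\leq T_\mu\leq C\,T_f$ as quadratic forms. Since $f\in L^p$, Lemma~\ref{finLpImpTfinSp} gives $T_f\in S_p$; in particular $T_f$ is a compact positive operator, so $T_\mu$ extends to a bounded positive operator dominated by $C\,T_f$. By the min-max principle the singular values satisfy $s_k(T_\mu)\leq C\,s_k(T_f)$ for every $k$, whence $\|T_\mu\|_{S_p}\leq C\,\|T_f\|_{S_p}<\infty$, i.e.\ $T_\mu\in S_p$.

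The main obstacle, such as it is, lies in recognizing that the pointwise inequality of Lemma~\ref{PointwiseBound} upgrades to the form domination $T_\mu\lesssim T_{\mu(B(\cdot,r))\,dv}$; once this is in hand the problem collapses onto the already-proved fact that Toeplitz operators with $L^p$ symbols belong to $S_p$ (Lemma~\ref{finLpImpTfinSp}) together with the order-monotonicity of the Schatten $p$-norm on positive operators. Notably, no off-diagonal summation over sublattices (as required in Theorem~\ref{MainThmForpleq1}) is needed here, precisely because for $p\geq1$ the Schatten norm behaves monotonically under operator domination and one need not fight the failure of the triangle inequality that complicates the range $0<p\leq1$.
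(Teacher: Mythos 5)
Your proposal is correct and follows essentially the same route as the paper: the links (a)$\Rightarrow$(b) and (c)$\Leftrightarrow$(d) are quoted from Lemma \ref{BerInLpImpSchat} and \cite{Z} exactly as in the paper, your pointwise lower bound $\widetilde{\mu}(z)\gtrsim\mu(B(z,\delta))$ is the same estimate the paper uses for its (b)$\Rightarrow$(d) step, and your form domination $T_\mu\lesssim T_{\mu(B(\cdot,r))\,dv}$ via Lemma \ref{PointwiseBound} followed by Lemma \ref{finLpImpTfinSp} is precisely the paper's proof of (c)$\Rightarrow$(a). The only difference is cosmetic: you spell out the singular-value monotonicity $s_k(T_\mu)\leq C\,s_k(T_f)$ that the paper leaves implicit in its final appeal to Lemma \ref{finLpImpTfinSp}.
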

\begin{proof}
That (c) and (d) are equivalent  and that both conditions are independent of $r > 0$ was proved in \cite{Z} when $n=1$, though the case $n>1$ is analogous. Note that (a) implies (b) follows from Lemma \ref{BerInLpImpSchat} and the easy proof that (b) implies (d) is analogous to the case $0<p\leq 1$.

We will finish the proof by showing that (c)$\Rightarrow$(a), so suppose that $\hat\mu_r = \mu(B(\cdot, r))\in L^p(\C,dv)$. Then by Fubini's theorem and the reproducing property
\begin{align*}
\langle T_{\hat\mu_r} f,f\rangle_{\Ftwophi} &=
 \int_{\C}  \int_{\C} \chi_{B(w, r)} (z) |f(w)|^2  e^{-2\phi(w)}  \, d\mu(z) \, dv(w)\\
	&= \int_{\C} \left(\int_{B(z,r)} |f(w)|^2 e^{-2\phi(w)} dv(w)\right) d\mu(z)
\end{align*}
 which by Lemma~\ref{PointwiseBound} implies that
\begin{align*}
	\langle T_\mu f,f \rangle_{\Ftwophi}  &= \int_{\C} |f(z)|^2 e^{-2\phi(z)} d\mu(z)\\
	&\leq C_r \int_{\C} \left( \int_{B(z,r)} |f(w) e^{-\phi(w)}|^2 dv(w)\right) d\mu(z)\lesssim  \langle T_{\hat\mu_r}f,f \rangle_{\Ftwophi}
\end{align*}
or $T_\mu \lesssim  T_{\hat\mu_r}$.  The proof is now completed by an application of Lemma \ref{finLpImpTfinSp}.
\end{proof}

 \end{document}